\theoremstyle{plain}
    \newtheorem{thm}{Theorem}[section]
    \newtheorem{appl}[thm]{Application}
    \newtheorem{lemma}[thm]{Lemma}
    \newtheorem{theorem}[thm]{Theorem}
\theoremstyle{definition}
    \newtheorem{remark}[thm]{Remark}
\theoremstyle{remark}
\theoremstyle{question}
\newcommand{\C}{\mathbb{C}}
\newcommand{\GG}{\mathbb{G}}
\newcommand{\PP}{\mathbb{P}}
\newcommand{\Q}{\mathbb{Q}}
\newcommand{\cit}{\mathbb{C}}
\newcommand{\pit}{\mathbb{P}}
\newcommand{\alb}{\operatorname{alb}}
\newcommand{\Aut}{\operatorname{Aut}}
\newcommand{\lin}{\operatorname{lin}}
\newcommand{\NS}{\operatorname{NS}}
\newcommand{\Alb}{\operatorname{Alb}}
\begin{document}


\title[A characterization of compact complex tori]
{A characterization of compact complex tori via automorphism groups}

\author{Baohua Fu}
\address
{ \textsc{Institute of Mathematics, AMSS, Chinese Academy of
Sciences,} \endgraf \textsc{55 ZhongGuanCun East Road, Beijing,
100190, P. R. China}} \email{bhfu@math.ac.cn}

\author{De-Qi Zhang}
\address
{
\textsc{Department of Mathematics, National University of Singapore,} \endgraf
\textsc{
10 Lower Kent Ridge Road,
Singapore 119076
}}
\email{matzdq@nus.edu.sg}

\begin{abstract}
 We  show that a compact K\"ahler manifold
$X$ is a complex torus if both the continuous part and discrete part of some automorphism group $G$ of $X$ are infinite groups, unless
$X$ is bimeromorphic to a non-trivial $G$-equivariant fibration. Some applications to dynamics are given.
\end{abstract}

\subjclass[2000]{
32H50, 
14J50, 
32M05, 
37B40 
}
\keywords{automorphism, iteration, complex dynamics, topological entropy}


\maketitle

\section{Introduction}\label{Intro}

We work over the field $\C$ of complex numbers.
Let $X$ be a compact K\"ahler manifold. Denote by $\Aut(X)$ the automorphism group of $X$ and by $\Aut_0(X)$ the identity connected component of $\Aut(X)$. By \cite{Fu}, $\Aut_0(X)$ has a natural meromorphic group structure.
  Further there exists a unique meromorphic subgroup, say $L(X)$, of
$\Aut_0(X)$, which is meromorphically isomorphic to a linear algebraic group and such
that the quotient $\Aut_0(X)/L(X)$ is a complex torus.
In the following, by a subgroup of $\Aut_0(X)$ we always mean a meromorphic subgroup and by a linear algebraic subgroup
of $\Aut_0(X)$ we mean a Zariski closed meromorphic subgroup contained in $L(X)$.

For a subgroup $G \le \Aut(X)$, the
pair $(X, G)$ is called {\em strongly primitive} if for
every finite-index subgroup $G_1$ of $G$,
$X$ is not bimeromorphic to a non-trivial $G_1$-equivariant fibration,
i.e., there does not exist any compact K\"ahler manifold
$X'$ bimeromorphic to $X$, such that $X'$ admits  a
$G_1$-equivariant holomorphic map $X' \to Y$ with $0 < \dim Y < \dim X$  and  $G_1 \le \Aut(X')$.
From the dynamical point of view, these manifolds are essential.
Our main result Theorem \ref{ThA} says that for these manifolds, unless it is  a complex torus,
there is no interesting dynamics if its automorphism group has positive dimension.

\begin{theorem}\label{ThA}
Let $X$ be a compact K\"ahler manifold and $G \le \Aut(X)$ a
subgroup of automorphisms. Assume the following three conditions.
\begin{itemize}
\item[(1)] $G_0 := G \cap \Aut_0(X)$ is infinite.
\item[(2)]  $|G : G_0| = \infty $.
\item[(3)] The pair $(X, G)$ is strongly primitive.
\end{itemize}
Then $X$ is a complex torus.
\end{theorem}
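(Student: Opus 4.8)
The plan is to reduce to the case $\Aut_0(X)\trianglelefteq G$, show that $X$ is then almost homogeneous under $\Aut_0(X)$, and finally split according to whether $X$ is uniruled — the uniruled case being excluded by the hypotheses and the non‑uniruled case forcing $X$ to be a torus.

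\emph{Reduction and almost homogeneity.} Condition (1) gives $\dim\Aut_0(X)>0$. Since $\Aut_0(X)$ is normal in $\Aut(X)$, the group $G\cdot\Aut_0(X)$ still satisfies (1)–(3): for (2) one has $\bigl(G\cdot\Aut_0(X)\bigr)/\Aut_0(X)\cong G/(G\cap\Aut_0(X))$, which is infinite, and for (3) any equivariant fibration for a finite‑index subgroup of $G\cdot\Aut_0(X)$ restricts to one for a finite‑index subgroup of $G$. So we may assume $\Aut_0(X)\trianglelefteq G$ and $G_0=\Aut_0(X)$. I claim $\Aut_0(X)$ has a dense orbit on $X$: otherwise the general orbit has some dimension $d$ with $0<d<\dim X$, and $x\mapsto\overline{\Aut_0(X)\cdot x}\in\Chow(X)$ produces a dominant meromorphic map $X\ratmap Y$ with $0<\dim Y<\dim X$; since $G$ normalizes $\Aut_0(X)$ it permutes the latter's orbits, so this map is $G$‑equivariant, and passing to a bimeromorphic model on which it becomes a morphism and $G$ acts by automorphisms (equivariant resolution of indeterminacies) contradicts (3). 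Thus $X$ is almost homogeneous; write $X_0\cong\Aut_0(X)/S$ for its unique dense orbit.

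\emph{The non‑uniruled case.} Assume $X$ is not uniruled. If the linear part $L(X)$ of $\Aut_0(X)$ were positive‑dimensional it would contain a subgroup isomorphic to $\GG_a$ or $\GG_m$, and outside the (proper) fixed locus of that subgroup its orbits are rational curves sweeping out a dense open set, so $X$ would be uniruled. Hence $\Aut_0(X)=:T$ is a complex torus. A faithful torus action on a connected space has finite stabilizer at a general point, for $T$ has only countably many subtori, so a positive‑dimensional stabilizer subtorus appearing at general points would be constant on a dense open set and act trivially there, hence on all of $X$; thus $\dim T\le\dim X$, and with almost homogeneity $\dim T=\dim X$ and $X_0=T/S$ with $S$ finite. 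Finally $X\setminus X_0=\varnothing$: an irreducible component $Z$ of it is $T$‑invariant of dimension $<\dim T$ and therefore carries a positive‑dimensional stabilizer subtorus $T'\le T$ acting trivially on $Z$; but then the compact positive‑dimensional complex torus $T'$ would have a fixed point in $X$, whereas near such a point the $T'$‑action is linearizable, so the induced representation on the tangent space is faithful, and that representation is simultaneously diagonalized with holomorphic characters $T'\to\C^\ast$ as eigenvalues, which are constant by the maximum principle — a contradiction. Hence $X=T/S$ is a complex torus.

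\emph{The uniruled case and the main obstacle.} Assume $X$ is uniruled. Its maximal rationally connected fibration $X\ratmap R$ is canonical, hence $G$‑equivariant on a suitable bimeromorphic model; $\dim R=\dim X$ contradicts uniruledness while $0<\dim R<\dim X$ contradicts (3), so $\dim R=0$ and $X$ is rationally connected. Then $X$ is projective with $q(X)=0$, so the torus part $\Aut_0(X)/L(X)$, which embeds up to isogeny in $\Alb(X)=0$ by \cite{Fu}, is trivial; thus $\Aut_0(X)=L(X)$ is a positive‑dimensional linear algebraic group under which $X$ is almost homogeneous. It then suffices to invoke that the automorphism group of a projective variety almost homogeneous under its linear‑algebraic identity component has only finitely many connected components: granting this, $|G:G_0|\le|\Aut(X):\Aut_0(X)|<\infty$ contradicts (2), so the uniruled case cannot occur. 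I expect the two genuine difficulties to be (a) the passage, in the almost‑homogeneity step, to a holomorphic $G$‑equivariant model, and (b) establishing — or pinning down a precise reference for — the finiteness of $\pi_0(\Aut(X))$ in this almost‑homogeneous linear case; the non‑uniruled argument above is by contrast essentially self‑contained.
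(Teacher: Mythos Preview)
Your proposal is correct, and the one substantial input you flag as difficulty (b) --- finiteness of $\pi_0(\Aut(X))$ for a projective manifold almost homogeneous under a linear algebraic group --- is precisely Theorem~\ref{ThB} of the paper, which is proved there via the bigness of $-K_X$ and Lemma~\ref{big}. Difficulty (a) is handled in the paper exactly as you suggest, by an equivariant resolution and a reference to \cite[Lemma 2.14]{Z-Tits}; it is a standard manoeuvre rather than a genuine obstacle.

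Your argument does take a genuinely different route in one half. The paper splits according to whether $q(X)=0$ or $q(X)>0$, while you split on uniruled versus non-uniruled; under the standing hypotheses these dichotomies coincide, and in the uniruled~$\Leftrightarrow q(X)=0$ branch both proofs land at Theorem~\ref{ThB}. The real divergence is in the other branch. The paper uses the Albanese map: strong primitivity forces $\alb_X$ to be generically finite onto its image, so $\kappa(X)\ge 0$; combined with $\kappa(X)\le 0$ this gives $\kappa(X)=0$, and Kawamata's theorem \cite{Ka} then makes $\alb_X$ a surjective bimeromorphic morphism, after which a short exceptional-locus argument shows it is an isomorphism. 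You instead argue directly that $\Aut_0(X)$ must be a compact torus $T$ (a nontrivial $L(X)$ would supply a $\GG_a$ or $\GG_m$ and hence a covering family of rational curves), that the open orbit is $T/S$ with $S$ finite (abelianness makes the stabilizer constant on the orbit, so a positive-dimensional $S_0$ would act trivially), and that the boundary is empty because a positive-dimensional compact torus cannot have an isolated fixed point (linearization plus the absence of nonconstant characters). Your route is more self-contained and avoids Kawamata's theorem; the paper's route is shorter once Kawamata is granted and comes with the explicit identification $X\cong\Alb(X)$.
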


As a key step towards Theorem \ref{ThA}, we prove the following result.
A proof for Theorem \ref{ThB}(2) is long overdue
(and we do it geometrically via \ref{ThB}(1)), but the authors could not find
it in any literature, even after consulting many experts across the continents.

\begin{theorem}\label{ThB}
Let $X$ be a compact K\"ahler manifold and $G_0 \subset \Aut_0(X)$ a linear algebraic subgroup.
Assume that $G_0$ acts on $X$ with a Zariski open dense orbit.
Then we have:
\begin{itemize}
\item[(1)]
$X$ is projective; the anti canonical divisor $-K_X$ is big, i.e.
$\kappa(X, -K_X) = \dim X$.
\item[(2)]
$\Aut(X)/\Aut_0(X)$ is finite.
\end{itemize}
\end{theorem}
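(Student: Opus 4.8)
The plan is to derive part (2) from part (1) by studying the action of $\Aut(X)$ on the N\'eron--Severi lattice. By (1), $X$ is projective and $-K_X$ is big. The first observation is that $X$ is unirational, hence rationally connected: the dense $G_0$-orbit is a homogeneous space $G_0/H$, which is dominated by $G_0$, and a connected linear algebraic group over $\C$ is a rational variety; since $X$ is birational to this orbit it is unirational, and a unirational smooth projective variety is rationally connected. Consequently $h^0(X,\Omega_X^p)=0$ for all $p>0$, so $\Pic(X)=\NS(X)$ has finite rank, the lattice $N:=\NS(X)/(\mathrm{torsion})$ is free of finite rank $\rho$, and $H^2(X,\R)=H^{1,1}(X,\R)=N\otimes\R$. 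Now $\Aut(X)$ acts on $N$, giving a homomorphism $\Phi\colon\Aut(X)\to\GL(N)\cong\GL_\rho(\Z)$; as $N$ is discrete and $\Aut_0(X)$ is connected, $\Aut_0(X)\subseteq K:=\ker\Phi$. It suffices to prove that $\Phi(\Aut(X))$ is finite and that $K/\Aut_0(X)$ is finite, since then $|\Aut(X):\Aut_0(X)|=|\Phi(\Aut(X))|\cdot|K:\Aut_0(X)|<\infty$.

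To see that $\Phi(\Aut(X))$ is finite, note that it preserves the pseudo-effective cone $\overline{\mathrm{Eff}}(X)\subset N\otimes\R$ (automorphisms carry effective divisors to effective divisors), which for the projective variety $X$ is a pointed closed convex cone with non-empty interior, and that it fixes the class $[-K_X]$, the canonical class being $\Aut(X)$-invariant. By (1) this class lies in the interior of $\overline{\mathrm{Eff}}(X)$. Dualizing, the compact cross-section $\{f\in\overline{\mathrm{Eff}}(X)^{\vee}:\langle f,[-K_X]\rangle=1\}$ is $\Phi(\Aut(X))$-invariant, so its barycentre $c$ is a $\Phi(\Aut(X))$-fixed class in the interior of the dual cone. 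Then $N\otimes\R$ splits $\Phi(\Aut(X))$-equivariantly as $\R[-K_X]\oplus\ker\langle c,\cdot\rangle$; the action on the line is trivial, while on $W:=\ker\langle c,\cdot\rangle$ the group preserves the bounded convex body $\big(\overline{\mathrm{Eff}}(X)\cap\{\langle c,\cdot\rangle=\langle c,[-K_X]\rangle\}\big)-[-K_X]$, which contains $0$ in its interior; hence $\Phi(\Aut(X))|_W$ lies in the compact group of linear maps preserving the (unique) John ellipsoid of that body. A discrete subgroup of a compact group is finite, so $\Phi(\Aut(X))$ is finite.

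To see that $K/\Aut_0(X)$ is finite: by construction $K$ acts trivially on $N$, hence trivially on $H^2(X,\R)=N\otimes\R$, so $K$ fixes an ample, i.e.\ K\"ahler, class. By the theorem of Lieberman and Fujiki, for a compact K\"ahler manifold the stabilizer in $\Aut(X)$ of a K\"ahler class is a finite extension of $\Aut_0(X)$; therefore $K/\Aut_0(X)$ is finite, and (2) follows.

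I expect the main obstacle to be the cone argument of the second paragraph --- in particular the assertions that $\overline{\mathrm{Eff}}(X)$ is salient with non-empty interior and that bigness of $-K_X$ puts $[-K_X]$ in its interior, and above all the lemma that an integral linear group fixing an interior point of a pointed convex cone is finite, which is the genuinely new ingredient and is handled here via the barycentre/John-ellipsoid compactness trick. The remaining steps (unirationality of $X$ through $G_0$, the standard consequences of rational connectedness for $\Pic$ and Hodge numbers, and the Lieberman--Fujiki finiteness theorem) are routine once (1) is in hand.
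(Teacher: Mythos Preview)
You explicitly set out to derive (2) from (1), and for that step your argument is correct; but the theorem has two parts, and (1) is where the real work lies --- you have left it entirely unaddressed. The paper's proof of (1) runs as follows. Projectivity of $X$ follows from unirationality (as you observe). Effectivity of $-K_X$ comes from wedging $n=\dim X$ vector fields from the Lie algebra of $G_0$ to get a nonzero section. For bigness one studies the Tits fibration $X\dasharrow G_0/N(H_0)$, where $H$ is the stabilizer of a point in the open orbit and $H_0$ its identity component. If this fibration is trivial then $H_0\lhd G_0$, the open orbit is a quotient of the connected linear group $G_0/H_0$ by a finite group, and a direct argument (Lemma~\ref{affine}: the boundary $X\setminus U$ is a big divisor and $-K_X$ dominates it) gives $-K_X$ big. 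If it is nontrivial, after equivariant blowups the fibration becomes a morphism $f:X'\to Y'$ arising from a subsystem of $|{-}\pi^*K_X|$; its general fibre $F$ has open orbit isomorphic to the connected linear group $N(H_0)/H_0$ itself, so $-K_F$ is big by the same lemma, hence $-\pi^*K_X$ is relatively big over $Y'$; combined with effectivity and an ample pullback from the base this forces $-\pi^*K_X$, and thus $-K_X$, to be big. None of this structure appears in your proposal, and your self-identified ``main obstacle'' (the cone argument) is not the obstacle at all --- the missing proof of (1) is.

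For the implication (1)$\Rightarrow$(2), your route differs from the paper's. The paper dispatches it in one line via Lemma~\ref{big}: any $\Aut(X)$-invariant big class --- here $[-K_X]$ --- forces $\Aut(X)/\Aut_0(X)$ to be finite (proved by blowing up so the class becomes nef and big, then citing that $\Aut_{[M]}/\Aut_0$ is finite for nef big $M$). Your argument (rational connectedness to identify $H^2(X,\R)$ with $\NS_{\R}(X)$; a John-ellipsoid compactness bound on the image of $\Aut(X)$ in $\GL(\NS)$, using that $[-K_X]$ is an interior fixed point of the pseudo-effective cone; Lieberman--Fujiki for the kernel) is a valid alternative that avoids the blowup. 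The rational-connectedness detour is in fact unnecessary: the kernel of $\Aut(X)\to\GL(\NS)$ already fixes every ample class, which is all Lieberman--Fujiki requires.
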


\begin{remark}\label{rThm}
(i) The condition (2) in Theorem \ref{ThA} is satisfied if  $G$ acts on  $H^2(X, \C)$
as an infinite group, or if $G$ has an element of positive entropy (cf.~\S \ref{setup1.1}
for the definition).

(ii)  Theorem \ref{ThB} implies that when $\dim X \ge 3$
the case(4) in \cite[Theorem 1.2]{Z-Tits} does not occur, hence it can be removed from the statement.

(iii) Theorem \ref{ThA} generalizes \cite[Theorem 1.2]{CWZ}, where it is proven for $\dim X = 3$ and under an additional assumption.

(iv) A similar problem
for endomorphisms of
homogeneous varieties has been studied by S. Cantat in \cite{Ca2}.
\end{remark}


Two applications are given. The first one
generalizes  the following result due to Harbourne  \cite[Corollary (1.4)]{Hb} to higher dimension:
if $X$ is a smooth projective rational surface with $\Aut_0(X) \ne (1),$
then $\Aut(X)/\Aut_0(X)$ is finite. To state it, recall (\cite[Theorem 4.1]{Fu})
that for any connected subgroup $H \le \Aut(X)$, there
exist a quotient space $X/H$ and an $H$-equivariant dominant meromoprhic map $X \dasharrow X/H$,
which satisfies certain universal property.

\begin{appl}\label{Cor1}
Let $X$ be a compact K\"ahler manifold with irregularity $q(X) = 0$.
Suppose that the quotient space $X/\Aut_0(X)$
has dimension $\le 1$. Then $X$ is projective and $\Aut(X)/\Aut_0(X)$ is finite.
\end{appl}

The second application essentially says that when we study dynamics of a compact K\"ahler manifold $X$,
we may assume that $\Aut_0(X)_{\lin} = (1)$, where $\Aut_0(X)_{\lin}$ is the largest connected linear
algebraic subgroup of $\Aut_0(X)$.

\begin{appl}\label{ThC}
Let $X$ be a smooth projective variety and $G_0 \lhd G \le \Aut(X)$.
Suppose that $G_0$ is a connected linear closed subgroup of $\Aut_0(X)$.
Let $Y$ be a $G$-equivariant resolution of the quotient space
$X/G_0$ and replace $X$ by a $G$-equivariant resolution
so that the natural map $\pi : X \to Y$
is holomorphic.
Then for any $g \in G$,
we have the equality of the first dynamical degrees:
$$d_1(g_{| X}) = d_1(g_{| Y}),$$
where $d_1(g_{| X}) := \max\{|\lambda| \, ; \, \lambda \,\, \text{is
an eigenvalue of} \,\, g^* \, | \, H^{1,1}(X) \}$.

In particular, $G_{| X}$ is of null entropy if and only if so is $G_{| Y}$
{\rm (cf.~\ref{setup1.1}).}
\end{appl}

If $q(X)=0$, then $\Aut_0(X)$ (hence $G_0$) is always a linear algebraic group. On the other hand,
if $G_0$ is not linear, then Application \ref{ThC} does not hold (cf.~Section \ref{Rmk}).

\par \vskip 0.5pc \noindent
{\bf Acknowledgement.} We would like to thank Michel Brion for suggesting a proof of
Theorem \ref{ThB} for spherical varieties, Michel Brion and Alan Huckleberry for very patiently explaining
to the second author
basic results on almost homogeneous varieties, the referee, Michel Brion, Jun-Muk Hwang and Akira Fujiki for clarifying Serre's example of non-algebraic action
of complex torus $\cit^* \times \cit^*$ on a $\pit^1$-bundle over an elliptic curve, and Serge Cantat for encouraging us to strengthen
\cite[Theorem 1.2]{CWZ} to the current Theorem \ref{ThA} (perhaps with some extra conditions)
and observing that the Albanese map in Theorem \ref{ThA} is indeed an isomorphism instead of
our original assertion of being bimeromorphic.
B. Fu is supported by
NSFC 11031008.  D.-Q. Zhang is supported by an ARF of NUS.

 \renewcommand{\thethm}
    {\arabic{section}.\arabic{subsection}.\arabic{thm}}

\section{Proof of Theorems}

\subsection{}
For a compact K\"ahler manifold $X$, denote by $\NS_\mathbb{R}(X)$ its Neron-Severi group. For an
element $[A] \in \NS_\mathbb{R}(X)$, let $\Aut_{[A]}(X) := \{\sigma \in \Aut(X) \, | \, \sigma^*[A] = [A] \,\, \text{in} \,\, \NS_\mathbb{R}(X) \}$.

\begin{lemma}\label{trans} {\rm (cf.~\cite[ChII, Propositions 3.1 and 6.1]{Hs})}
Let $X$ be a projective variety and $U$ an affine open subset of $X$. Then
$D= X \setminus U$ is of pure codimension $1$ and further, when $D$ is $\Q$-Cartier, it is a big divisor,
i.e. the Iitaka $D$-dimension $\kappa(X, D) = \dim X$.
\end{lemma}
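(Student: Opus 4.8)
The plan is to prove the two assertions separately, in each case exploiting that an affine variety carries ``many'' global regular functions. For the first assertion I would first pass to the normalization $\nu\colon\widetilde{X}\to X$: as $\nu$ is finite (hence affine) and surjective, $\nu^{-1}(U)$ is again affine and $\nu$ preserves codimensions of closed subsets, so it suffices to treat $\widetilde{X}$; thus assume $X$ normal. Arguing by contradiction, suppose $D$ has an irreducible component $Z$ with $\operatorname{codim}_X Z\ge 2$, and set $W:=X\setminus\bigcup Z'$, the union running over the \emph{other} components of $D$. Then $W$ is a normal variety, $U$ is an affine open subset of $W$, and $Z_0:=W\setminus U$ is the dense open subset $Z\setminus\bigcup Z'$ of $Z$, hence nonempty and of codimension $\ge 2$ in $W$. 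Now $j\colon U\hookrightarrow W$ is an affine morphism: $W$ is separated, so for every affine open $V\subseteq W$ the set $j^{-1}(V)=U\cap V$ is an intersection of two affine opens, hence affine. Therefore $U\cong\underline{\operatorname{Spec}}_W(j_*\mathcal{O}_U)$ over $W$. On the other hand, $W$ being normal and $Z_0$ of codimension $\ge 2$, Hartogs' extension (a rational function regular in codimension one on a normal variety is regular everywhere) gives $j_*\mathcal{O}_U=\mathcal{O}_W$; hence $U\cong\underline{\operatorname{Spec}}_W(\mathcal{O}_W)=W$ over $W$, i.e.\ $U=W$, contradicting $Z_0\ne\emptyset$. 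This proves $D$ has pure codimension one.

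For the second assertion, suppose $D$ is $\mathbb{Q}$-Cartier and fix $c>0$ with $cD$ Cartier; put $L:=\mathcal{O}_X(cD)$ and let $\sigma\in H^0(X,L)$ be the canonical section, which vanishes exactly along $cD$ and trivializes $L$ over $U=X\setminus\operatorname{Supp}(cD)$. Set $A:=\Gamma(U,\mathcal{O}_U)$; since $U$ is affine of dimension $n:=\dim X$, $A$ is a finitely generated $\mathbb{C}$-algebra of Krull dimension $n$, so by Noether normalization it contains $t_1,\dots,t_n$ algebraically independent over $\mathbb{C}$. Each $t_i$ is a rational function on $X$ regular on $U$; since, locally around any point of $\operatorname{Supp}(cD)$, the set $U$ is the nonvanishing locus of the local equation of $cD$, one may write $t_i$ locally as $g/(\text{local equation})^{N}$, so by quasi-compactness of $\operatorname{Supp}(cD)$ there is an integer $N_0>0$ with $t_i\sigma^{N_0}\in H^0(X,L^{\otimes N_0})$ for all $i$. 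Then for every $a=(a_1,\dots,a_n)\in\mathbb{Z}_{\ge 0}^n$ with $|a|\le m$ the monomial $t^a:=t_1^{a_1}\cdots t_n^{a_n}$ yields a section $t^a\sigma^{N_0 m}\in H^0(X,L^{\otimes N_0 m})=H^0(X,\mathcal{O}_X(N_0 m c D))$ (multiply $\prod_i(t_i\sigma^{N_0})^{a_i}$ by $\sigma^{N_0(m-|a|)}$), and these sections are $\mathbb{C}$-linearly independent since the $t_i$ are algebraically independent. Hence
\[
h^0\bigl(X,\mathcal{O}_X(N_0 m c D)\bigr)\;\ge\;\binom{m+n}{n}\;\sim\;\frac{m^n}{n!},
\]
so $\kappa(X,N_0 c D)=n$ and therefore $\kappa(X,D)=\kappa(X,N_0 c D)=\dim X$.

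I do not anticipate a genuine obstacle: the first part is the affineness of open immersions into a separated scheme together with normal-variety Hartogs, and the second is Noether normalization together with a local clearing-of-denominators estimate. The points that need the most care are, in the purity argument, correctly isolating a \emph{pure}-codimension-$\ge 2$ complement (the passage to $W$) before invoking Hartogs, along with the preliminary reduction to $X$ normal; and, in the bigness argument, the uniform choice of $N_0$ via quasi-compactness and the check that $t^a\sigma^{N_0 m}$ is a regular (not merely rational) section — which is precisely where the hypothesis that $D$ is $\mathbb{Q}$-Cartier enters, $cD$ being then a bona fide Cartier divisor with a section $\sigma$.
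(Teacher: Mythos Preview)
Your proof is correct. Note, however, that the paper does not supply its own argument for this lemma: the statement is simply attributed to Hartshorne's \emph{Ample Subvarieties} (Ch.~II, Propositions~3.1 and~6.1), with no proof given in the text. Your two-part approach---reduction to the normalization followed by Hartogs/$S_2$-extension for the purity statement, and Noether normalization on $\Gamma(U,\mathcal{O}_U)$ together with pole-clearing against the canonical section of $\mathcal{O}_X(cD)$ for bigness---is the classical route and is essentially what those cited propositions encode. So there is no meaningful comparison to be made; your write-up in fact fills in what the paper leaves as a reference.
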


\begin{lemma} \label{big}
Let $X$ be a compact K\"ahler manifold and $B$ a big Cartier divisor. Then $X$ is projective.
Let $G \le \Aut(X)$ be a subgroup such that $g^*B \sim B$ for every $g \in G$.
Then $|G : G \cap \Aut_0(X)| < \infty$.
\end{lemma}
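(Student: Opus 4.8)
The plan is to prove the two assertions separately. For projectivity: since $B$ is big, $\kappa(X,B)=\dim X$, so for $m\gg 0$ the meromorphic map associated to $|mB|$ is bimeromorphic onto its image, producing $\dim X$ algebraically independent meromorphic functions on $X$; thus $X$ is Moishezon. A Moishezon compact K\"ahler manifold is projective (Moishezon's theorem), which gives the first claim. From now on $X$ is projective.

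For the index statement the crucial step is to control the linear action of $G$ on $\NS_{\mathbb{R}}(X)$. Let $\rho\colon \Aut(X)\to\GL(\NS_{\mathbb{R}}(X))$ be the natural representation; since $g^{*}B\sim B$ for all $g\in G$, the class $[B]\in\NS_{\mathbb{R}}(X)$ is fixed by $\rho(G)$. As $X$ is projective and $B$ is big, $[B]$ lies in the interior of the pseudo-effective cone $\overline{\mathrm{Eff}}(X)$, a closed, full-dimensional, salient (pointed) cone that is preserved by $\rho(\Aut(X))$. Hence
\[
\mathcal{C}:=\overline{\mathrm{Eff}}(X)\cap\bigl([B]-\overline{\mathrm{Eff}}(X)\bigr)
\]
is a compact convex body: it is closed and convex, it contains $\tfrac12[B]$ in its interior (both $\tfrac12[B]$ and $[B]-\tfrac12[B]$ being big), and it is bounded because $\overline{\mathrm{Eff}}(X)$ contains no line. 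Since $\rho(G)$ fixes $[B]$ and preserves $\overline{\mathrm{Eff}}(X)$, it preserves $\mathcal{C}$; the group of linear automorphisms of a convex body with nonempty interior is relatively compact, while $\rho(G)$ is a subgroup of the discrete group $\GL(\NS(X)/(\text{torsion}))$. Therefore $\rho(G)$ is finite.

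Now fix an ample class $[A]$ and let $\kappa$ be the average of $[A]$ over the finite group $\rho(G)$. Being a positive combination of ample classes it is again an ample class, and by construction $g^{*}\kappa=\kappa$ for every $g\in G$; regarded in $H^{1,1}(X,\mathbb{R})$ it is a K\"ahler class fixed by $G$. By the theorem of Lieberman and Fujiki, the group $\Aut_{\kappa}(X)=\{g\in\Aut(X)\ :\ g^{*}\kappa=\kappa\}$ has only finitely many connected components, its identity component being $\Aut_{0}(X)$; in particular $|\Aut_{\kappa}(X):\Aut_{0}(X)|<\infty$. Since $G\subseteq\Aut_{\kappa}(X)$, we obtain $|G:G\cap\Aut_{0}(X)|\le|\Aut_{\kappa}(X):\Aut_{0}(X)|<\infty$.

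The main obstacle is the middle step: passing from the fact that $G$ fixes the single big class $[B]$ to the finiteness of $\rho(G)$ — fixing one class of a lattice is, of course, far from enough in general, so the content lies in producing the correct $\rho(G)$-invariant compact convex body and using that the pseudo-effective cone is pointed. Once $\rho(G)$ is known to be finite, the averaging construction and the Lieberman--Fujiki theorem conclude the proof immediately; note that only the numerical class of $B$ (and its bigness) is actually used in the second part, the Cartier hypothesis being needed only for projectivity.
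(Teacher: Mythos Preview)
Your proof is correct, and the first paragraph (projectivity via Moishezon) is essentially identical to the paper's. The second part, however, follows a genuinely different route.

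The paper does not work directly with the big class $[B]$ on $X$. Instead it passes to a $G$-equivariant blowup $\pi\colon X'\to X$ resolving the base locus of $|mB|$, so that $\pi^*(mB)=M+F$ with $M$ base-point free, hence nef and big; it then invokes the known statement (cited from \cite{JDG}, \cite{Li}) that for a nef and big class the stabilizer $\Aut_{[M]}(X')$ is a finite extension of $\Aut_0(X')$, and finishes by a short descent from $X'$ back to $X$ using an ample class on $X$. In other words, the paper trades the cone argument for an equivariant resolution and a citation.

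Your argument stays on $X$ throughout: you exploit that a big class lies in the interior of the pointed cone $\overline{\mathrm{Eff}}(X)$ to build the compact convex body $\mathcal{C}=\overline{\mathrm{Eff}}(X)\cap([B]-\overline{\mathrm{Eff}}(X))$, deduce that the image of $G$ in $\GL(\NS(X)/\mathrm{tors})$ is finite, average an ample class, and apply Lieberman--Fujiki once. This is more self-contained and in fact slightly stronger in what it proves along the way (finiteness of the whole action on $\NS_{\mathbb{R}}$, not just preservation of one nef and big class). The paper's route is shorter on the page because the cone/compact-body step is hidden in the cited lemma, at the cost of the blowup and the two-step descent. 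Both arguments ultimately rest on the same Lieberman--Fujiki finiteness for the stabilizer of a K\"ahler (or ample) class.
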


\begin{proof}
The existence of a big divisor on $X$ implies that $X$ is Moishezon,
so $X$ is projective since it is also K\"ahler (\cite{Mo}).
Replacing $B$ by a multiple, we may assume that the complete
linear system $|B|$ gives rise to a birational map
$\Phi_{|B|} : X \dasharrow Y$.
Take a $G$-equivariant blowup $\pi : X' \to X$ such that
$|\pi^*B| = |M| + F$ where $|M|$ is base point free and hence nef
and big.  Then
$\Aut_{[M]}(X')$  is a finite
extension of $\Aut_0(X')$ (cf.~\cite[Lemma 2.23]{JDG}, \cite[Proposition 2.2]{Li}).
By the assumption,  $G \le \Aut_{[M]}(X')$.
Set $G_0 := G \cap \Aut_0(X')$. Then $|G : G_0| \le |\Aut_{[M]}(X') : \Aut_0(X')| < \infty$.
Take an ample divisor $A$ on $X$. Then ${G_0}_{| X'}$ fixes the class $[\pi^*A]$
and hence ${G_0}_{ | X} \le \Aut_{[A]}(X)$. As $\Aut_{[A]}(X)$ is a finite extension of
 $\Aut_0(X)$, ${G_0}_{| X}$ is a finite extension of $G_0 \cap \Aut_0(X)$.
Now the lemma follows.
\end{proof}

\begin{lemma} \label{affine}
Let $X$ be a compact K\"ahler manifold and $G_0 \subset \Aut_0(X)$ a linear algebraic subgroup.
Assume that $G_0$  acts on $X$ with a Zariski open dense orbit.
Then $X$ is projective.
Assume furthermore that the open $G_0$-orbit $U$ is isomorphic to $G_0/\Gamma$, for some finite group $\Gamma$.
Then $-K_X$ is big.
\end{lemma}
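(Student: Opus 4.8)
The proof splits into two independent parts.

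For projectivity — which needs no hypothesis on the stabiliser — I would argue as follows. Pick any point $x_0$ in the open orbit, put $H=\Stab_{G_0}(x_0)$, and note that the orbit map $G_0\to X$, $g\mapsto g\cdot x_0$, is holomorphic with dense image. A subvariety $S\subset G_0$ of dimension $\dim X$ on which the orbit map is generically finite exists (a general linear section of codimension $\dim H$ does, since by dimension count it meets the orbit-map fibres $gH$ in finite sets). Embed $S$ as a Zariski-dense open subset of a smooth projective variety $\overline{S}$; the closure of the graph of $S\to X$ inside the compact K\"ahler manifold $\overline{S}\times X$ extends this to a dominant, still generically finite, meromorphic map $\overline{S}\dashrightarrow X$. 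Pulling back meromorphic functions along it embeds the meromorphic function field of $X$ into $\C(\overline{S})$ as a subfield over which $\C(\overline{S})$ is a finite extension, so it has transcendence degree $\dim\overline{S}=\dim X$: $X$ is Moishezon, and being K\"ahler it is projective by \cite{Mo}.

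For the bigness of $-K_X$ assume now $U\cong G_0/\Gamma$ with $\Gamma$ finite, so that $n:=\dim X=\dim G_0$; passing to the identity component we may take $G_0$ connected, and then $U$ — a finite quotient of the affine group $G_0$ — is affine and Zariski-open in the (now projective) variety $X$. The plan is to produce a single global section of $-K_X$ cutting out exactly the boundary. Differentiating the $G_0$-action embeds the Lie algebra $\mathfrak g_0$ into $H^0(X,T_X)$; fixing a basis of $\mathfrak g_0$ I get holomorphic vector fields $\xi_1,\dots,\xi_n$ on $X$ and the section $\theta:=\xi_1\wedge\cdots\wedge\xi_n\in H^0(X,\wedge^nT_X)\cong H^0(X,-K_X)$. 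On $U$ the $\xi_j$ restrict to translation-invariant vector fields, which frame $T_X|_U$ (invariant fields trivialise $T_{G_0}$ and $G_0\to U$ is \'etale), so $\theta$ is nowhere zero there; hence $-K_X\sim\operatorname{div}(\theta)$, an effective divisor supported on the boundary $D:=X\setminus U$. Now $G_0$, being connected, preserves $D$ and therefore fixes each of its finitely many prime components $D_i$; so each $\xi_j$ generates a flow inside $G_0$ preserving $D_i$, i.e.\ $\xi_j$ is tangent to $D_i$ along its smooth locus. Then at a smooth point $p\in D_i$ the $n$ vectors $\xi_1(p),\dots,\xi_n(p)$ all lie in the $(n-1)$-dimensional space $T_pD_i$, forcing $\theta(p)=0$; as this holds on a dense subset of $D_i$, $\theta$ vanishes along every $D_i$, i.e.\ $\operatorname{div}(\theta)\ge D_{\mathrm{red}}:=\sum_iD_i$. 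Since $U$ is affine and open in the projective variety $X$, Lemma \ref{trans} gives that $D_{\mathrm{red}}=X\setminus U$ is big; hence $-K_X\sim D_{\mathrm{red}}+(\text{effective})$ is big, that is $\kappa(X,-K_X)=\dim X$.

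The only step that is not a formality will be the vanishing of $\theta$ along the \emph{whole} boundary: its non-vanishing on $U$ yields only the effectiveness of $-K_X$, which is far weaker than bigness (on a rational elliptic surface $-K$ is effective but not big). What rescues the argument is that each boundary prime divisor is $G_0$-stable — a direct consequence of the connectedness of $G_0$ — so the infinitesimal generators of the action are everywhere tangent to it. In short: (1) exhibit $X$ as the target of a generically finite dominant meromorphic map from a projective variety, so $X$ is Moishezon and hence projective; (2) build $\theta=\xi_1\wedge\cdots\wedge\xi_n\in H^0(X,-K_X)$ from the $G_0$-action; (3) use $G_0$-invariance of the $D_i$ to force $\theta$ to vanish along all of them; (4) conclude with the bigness of $X\setminus U$ (Lemma \ref{trans}), which holds because $U$ is affine.
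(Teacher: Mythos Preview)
Your proof is correct and follows the same overall strategy as the paper: show $X$ is Moishezon (hence projective) via a dominant meromorphic map from a projective variety, then show $-K_X$ dominates the reduced boundary $D_{\mathrm{red}}=X\setminus U$ and invoke Lemma~\ref{trans}. The execution differs in two places. For projectivity, the paper takes the cleaner route of observing that $G_0$ is rational (Chevalley), so its Fujiki compactification $G_0^*$ dominates $X$ meromorphically and $X$ is outright unirational; your linear-section construction works but is more laborious. For bigness, the paper first passes to an equivariant cover to reduce to $\Gamma=(1)$ and then cites \cite[Theorem~2.7]{HT} for the inequality $-K_X\ge D_{\mathrm{red}}$, whereas you give that vector-field argument directly and observe that it goes through for arbitrary finite $\Gamma$ because $G_0\to G_0/\Gamma$ is \'etale --- this is a mild simplification, since it makes the proof self-contained and bypasses the covering step.
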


\begin{proof}
By a classical result of Chevalley, a connected linear algebraic group is a rational variety. By \cite{Fu}, $G_0$ has a compactification $G_0^*$ such that the map $G_0 \times X \to X$ extends to a meromorphic map
$G_0^* \times X \dasharrow X$.
Hence if $G_0$  is a linear algebraic group and has a Zariski dense orbit in $X$,
then $X$ is meromorphically dominated by a rational variety $G_0^*$ and is unirational.  Hence $X$ is Moishezon and also K\"ahler. Thus $X$ is projective.

Let $f : X \to Y$ be a $G_0$-equivariant compactification of the quotient map $G_0 \to G_0/\Gamma$.
By the ramification divisor formula $K_X = f^*K_Y + R_f$ with $R_f$ effective,
to say $-K_Y$ is big, it suffices to say the same for $-f^*K_Y$ or $-K_X$.
So we may assume that $\Gamma = (1)$.

Let $D = \sum_i D_i$ be the irreducible decomposition of
$D := X \setminus U$, which is $G_0$-stable and of pure codimension one since
$U$ is affine and by Lemma \ref{trans}. Furthermore, Lemma \ref{trans}  implies that $D$ is big.
By \cite[Theorem 2.7]{HT}, we have $-K_X = \sum_i a_i D_i$ for some integers $a_i \geq 1$.
Thus $\kappa(X, -K_X) = \kappa(X, D) = \dim X$.
\end{proof}

\subsection{Proof of Theorem \ref{ThB}}

The assertion (2) follows from (1) and Lemma \ref{big} since every automorphism of $X$ preserves
the divisor class $[-K_X]$. We now prove the assertion (1).
Replacing $G_0$ by its connected component, we may assume that $G_0$ is connected.
Let $U = G_0/H$ be the open $G_0$-orbit in $X$, where $H = (G_0)_{x_0}$ is the stabilizer subgroup of $G_0$
at a point $x_0 \in U$.
First we show that $-K_X$ is effective.
Let $\mathfrak{g}$  be the Lie algebra of $G_0$. As $X$ is almost homogeneous, we can take $n = \dim X$ elements
$v_1, \cdots, v_n$ in $\mathfrak{g}$  such that $\sigma =
\tilde{v_1} \wedge \cdots \wedge \tilde{v_n}$ is not identically
zero on $X$, where $\tilde{v_i}$
is the vector field corresponding to $v_i$ via the isomorphism $\mathfrak{g} \simeq H^0(X, T_X)$. Then $\sigma$ gives a
non-zero section of $-K_X$. Hence $-K_X$ is effective.

Let $H_0$ be the identity connected component of $H$ and
$N(H_0)$ its normalizer in $G_0$. We consider the Tits fibration
$X \dasharrow Y$ which on the open orbit is the $G_0$-equivariant quotient map $U = G_0/H \to G_0/N(H_0)$
with respect to the natural actions of $G_0$ on $G_0/H$ and $G_0/N(H_0)$
(cf.~\cite[Propositions 1 and 6, page 61 and 65 ]{HO}, or \cite[\S 1.3]{Br07}).

If the Tits fibration is trivial, i.e., its image is a point,
then $G_0=N(H_0)$. Hence $H_0$ is a normal subgroup of $G_0$ and the quotient $G_0/H_0$ is
a connected linear algebraic group. Thus $-K_X$ is big by Lemma \ref{affine}.

Now assume that the Tits fibration $X \dasharrow Y$ is non-trivial, i.e.,
$\dim G_0/N(H_0) > 0$.
Taking $G_0$-equivariant blowups $\pi: X' \to X$ and $Y' \to Y$, we may assume that
$\pi^*(-K_X) = L + E$ and a base point free linear system $\Lambda \subseteq |L|$
gives rise to the Tits fibration $f: X' \to Y'$ (cf.~\cite{HO}, \cite{Br07}).
Write $L = f^*A$ with $A$ very ample.
Write $K_{X'} = \pi^*K_X + E'$ with $E'$ effective.
Let $F$ be a general fibre of $f$. Then $F$ is almost homogeneous under the
action of the linear algebraic group $N(H_0)/H_0$ and the latter is isomorphic
to the open orbit of $F$.
Hence $-K_F$ is big by Lemma \ref{affine}.

So $-\pi^*K_X | F = -K_{X'} | F + E'| F = - K_F + E'| F$ is big
i.e., $-\pi^*K_X$ is relatively big over $Y'$, which is also effective by the discussion above.
By \cite[Lemma 2.5]{CCP}, the divisor
$-\pi^*K_X + f^*A = L + E + f^*A = 2L + E$ is big.
Thus
$\kappa(X', -\pi^*K_X) = \kappa(X', L + E) = \kappa(X', 2L + E) = \dim X'$.
So $-\pi^*K_X$ and hence $-K_X$ are both big.
This proves Theorem \ref{ThB}(1).

\subsection{Proof of Theorem \ref{ThA}}

Since $(X, G)$ is strongly primitive, there is no non-trivial $G$-equivariant fibration.
In particular, the Kodaira dimension $\kappa(X) \le 0$,
noting that $|G| = \infty$ and that a variety $Y$ of general type is known to have
finite $\Aut(Y)$.
Let
$$\bar{G}_0 \le \Aut_0(X)$$
be the Zariski-closure of $G_0$ which is normalized by $G$ and of dimension $\geq 1$
by our assumption on $G_0$, and let $\bar{G}_{00} := (\bar{G}_0)_{0}$ be its connected component.

If $\bar{G}_0$ does not have a Zariski-dense open orbit in $X$, then
as in \cite[Lemma 2.14]{Z-Tits} with $H := \bar{G}_{00}$
which is normalized by a finite-index subgroup $G_1$ of $G$,
there is a $G_1$-equivariant non-trivial fibration,
contradicting the strong primitivity of $(X, G)$.
So we may assume that $X$ is almost homogeneous under the action of the algebraic
group $\bar{G}_0$.

Suppose that $q(X) = 0$.
Then $\Aut_0(X)$ is a linear algebraic group (cf.~\cite[Theorem 3.12]{Li}).
The natural composition
$G \to G . \bar{G}_0 \to (G . \bar{G}_0)/((G . \bar{G}_0) \cap \Aut_0(X))$ induces
the first injective homomorphism below while the middle one is due to
the second group isomorphism theorem:
$$G/G_0 \hookrightarrow (G . \bar{G}_0)/((G . \bar{G}_0) \cap \Aut_0(X))
\cong ((G . \bar{G}_0) . \Aut_0(X))/\Aut_0(X) \le \Aut(X)/\Aut_0(X)$$
where the last group is finite by Theorem \ref{ThB}.
This contradicts the assumption.

Suppose now that $q(X) > 0$.
Let $\alb_X : X \to A := \Alb(X)$ be the Albanese map which is automatically $\Aut(X)$- and hence
$G$-equivariant,
and which must be generically finite onto the image $\alb_X(X)$ by the strong primitivity of
$(X, G)$. Hence $\kappa(X) \ge \kappa(\alb_X(X)) \ge 0$.
Thus $\kappa(X) = 0$. So $\alb_X$ is a bimeromorphic and surjective morphism (cf.~\cite[Theorem 24]{Ka}).

Since $X$ is almost homogeneous under the action of $\bar{G}_0$ and also
of $\bar{G}_{00}$, so is $A$ under
the action of $\bar{G}_{00 |A}$. Hence $\bar{G}_{00 |A} = \Aut_0(A) = A$.
We still need to show that $\alb_X : X \to A$ is an isomorphism.
Suppose the contrary that we have a non-empty exceptional locus $E \subset X$ over which
$\alb_X$ is not an isomorphism.
Then both $E$ and $F := \alb_X(E)$ are stable under the actions of $\bar{G}_{00}$, and
$\bar{G}_{00 |A} = A$, respectively.
Hence $\dim F \geq \dim A$, contradicting the fact that $\alb_X$ is a bimeromorphic map.
Theorem \ref{ThA} is proved.

\section{Proof of Applications}
\subsection{ Proof of Application \ref{Cor1}}

Set $G := \Aut(X)$ and $G_0 := \Aut_0(X)$.
Since $q(X) = 0$, $G_0$ is a linear algebraic group (cf.~\cite[Theorem 3.12]{Li}).
By \cite[Lemma 4.2]{Fu} and since $G_0 \lhd G$,
there is a quotient map $X \dasharrow Y = X/G_0$
such that the action of $G$ on $X$ descends to a (not necessarily faithful) action of $G$ on $Y$.
Taking a $G$-equivariant resolution $Y' \to Y$ and
a $G$-equivariant resolution $X' \to \Gamma_{X/Y'}$ of the graph of
the composition $X \dasharrow Y \dasharrow Y'$,
the natural map $f: X' \to Y'$ is holomorphic and $G$-equivariant.
A general fibre $F$ of $f$
is almost homogeneous under the action of $G_0$.
If $\dim Y' = 0$, then Application \ref{Cor1} follows from Theorem \ref{ThB}.

Suppose that $\dim Y' = 1$. Since $q(Y') \le q(X') = q(X) = 0$, $Y' \cong \PP^1$.
So $-K_{Y'}$ is ample.
By Theorem \ref{ThB}, $-K_{X'} | F = -K_F$ is big, i.e., $-K_{X'}$ is relatively big over $Y'$.
So $B' := -K_{X'} + m f^*(-K_{Y'})$, with $m >> 1$, is a big divisor
(cf.~Proof of \cite[Lemma 2.5]{CCP}), whose class is stabilized by $G$.
Now $B = \pi_* B'$, with $\pi : X' \to X$ the natural birational morphism,
is a big divisor on $X$ whose class is stabilized by $G$.
Thus Application \ref{Cor1} follows from Lemma \ref{big}.

\subsection{}\label{setup1.1}
We recall some basic notions from dynamics.
Let $X$ be a compact K\"ahler manifold.
For an automorphism $g \in \Aut(X)$, its (topological)
{\it entropy} $h(g) = \log \rho(g)$ is defined as
the logarithm of the {\it spectral radius} $\rho(g)$, where
$$\rho(g) := \max \{|\lambda| \, ; \, \lambda \,\,\,
\text{is an eigenvalue of} \,\,\,
g^* | \oplus_{i \ge 0} H^i(X, \C)\}.$$
By the fundamental result of Gromov and Yomdin,
the above definition is equivalent to the original dynamical definition of entropy
(cf.\ \cite{Gr}, \cite{Yo}).

An element $g \in \Aut(X)$ is of {\it null entropy}
if its (topological) entropy $h(g)$ equals $0$.
For a subgroup $G$ of $\Aut(X)$, we define the {\it null subset} of $G$ as
$$N(G) := \{g \in G \, | \, g \, \,
\text{\rm is of null entropy, i.e.,} \, h(g) = 0\} $$
which may {\it not} be a subgroup.
A group $G \le \Aut(X)$ is of {\it null entropy} if every $g \in G$
is of null entropy, i.e., if $G$ equals $N(G)$.

For $g \in \Aut(X)$, let
$$d_1(g) := \max\{|\lambda| \, ; \, \lambda \,\, \text{is
an eigenvalue of} \,\, g^* \, | \, H^{1,1}(X) \}$$ be the {\it first dynamical degree} of $g$
(cf.~\cite[\S 2.2]{DS}), which is $\geq 1$.

Let $X$ and $Y$ be compact K\"ahler manifolds of dimension $n$ and $l$, with K\"ahler forms  (or ample divisors when $X$ and $Y$ are projective)
$\omega_X$ and $\omega_Y$, respectively.
For a surjective holomorphic map $\pi: X \to Y$ and an automorphism $g \in \Aut(X)$, the first relative dynamical degree of $g$ (cf.~\cite[\S 3]{DNT}) is defined as
$d_1(g|\pi) = \lim_{s \to \infty} \lambda_1(g^s|\pi)^{1/s}$; here
$$\lambda_1(g^s | \pi) = \langle (g^s)^* \omega_X \wedge \pi^* \omega_Y^l, \omega_X^{n-1-l} \rangle;$$
it depends on the choice of K\"ahler forms, but $d_1(g|\pi)$ does not, because of sandwich-type
inequalities between positive multiples of any two K\"ahler forms.

\subsection{Proof of Application \ref{ThC}}

By the Khovanskii-Tessier inequality,
the first dynamical degreee $d_1(g) = 1$ if and only if the topological entropy $h(g) = 0$
(cf.~\cite[Corollaire 2.2]{DS}), hence the second claim follows from the first one.
By \cite[Theorem 1.1]{DNT}, $d_1(g_{| X}) = \text{max} \{ d_1(g_{| Y}), d_1(g| \pi)\}$. Thus, to prove Application \ref{ThC}, it suffices to show $d_1(g | \pi) \le 1$.

Let $F$ be a general fibre of $\pi : X \to Y$. Then $F$ is almost homogeneous
under the action of  $G_0$. Hence $-K_X | F = -K_F$
is big by Theorem \ref{ThB}, i.e., $-K_X$ is relatively big over $Y$.
Let $A$ be an ample divisor on $Y$ such that $-K_X + \pi^*A$ is a big divisor on $X$
(cf.~Proof of \cite[Lemma 2.5]{CCP}) and is hence equal to $L + E$
for an ample $\Q$-divisor $L$ and an effective $\Q$-divisor $E$.

Set $n := \dim X$ and $\ell := \dim Y$.
Noting that $\pi^*A \cdot F = 0$,
we have
$$\begin{aligned}
\lambda_1(g^s | \pi) :&= (g^s)^* (L) \cdot L^{n-1-\ell} \cdot F
\le (g^s)^* (L+E) \cdot L^{n-1-\ell} \cdot F \\
&= (g^s)^* (-K_X) \cdot L^{n-1-\ell} \cdot F
= (-K_X) \cdot L^{n-1-\ell} \cdot F = :c
\end{aligned}$$
where the last term is a positive number independent of $s$.
Hence
$$d_1(g | \pi) = \dim_{s \to \infty} \, (\lambda_1(g^s | \pi))^{1/s} \le \dim_{s \to \infty} \, c^{1/s} = 1.$$
This proves Application \ref{ThC}.

\subsection{Remarks} \label{Rmk}

(i) If we denote by $K$ the kernel of the natural surjective homomorphism $G_{| X} \to G_{| Y}$, then we have an equality of
null sets (as sets of cosets):
$$
N(G_{| X})/K = N(G_{| Y}).
$$

(ii) If the irregularity $q(X) = 0$, then $\Aut_0(X)$ and hence $G_0$ are always linear algebraic groups
(cf.~\cite[Theorem 3.12]{Li}). In this case,
Application \ref{ThC} implies that we may
assume that $\Aut_0(X) = (1)$ when studying dynamics.

(iii) Application \ref{ThC} does not hold if
 $G_0$  is not linear. For example, let $X = T_1 \times T_2$ be the product of two complex tori and $G_0 = \Aut_0(T_1)$. The quotient space $Y:=X/G_0$ is $T_2$. Suppose that $g \in \Aut(T_1)$ is an element of positive entropy
 which acts trivially on $T_2$. Then we have
 $d_1(g_{| X}) > d_1(g_{| Y}) = 1$.

\end{document}